\documentclass{article}
\usepackage{amsmath, amssymb, amsthm}

\usepackage{geometry}
\usepackage{hyperref}
\usepackage[T1]{fontenc}

\usepackage{indentfirst}
\newtheorem{theorem}{Theorem}[section]

\newtheorem{corollary}{Corollary}[section]

\newtheorem*{theorem*}{Theorem}
\newtheorem*{remark*}{Remark}
\newtheorem*{problem*}{Problem}
\newtheorem*{conjecture*}{Conjecture}
\newtheorem{lemma}[theorem]{Lemma}
\newcommand{\R}{\mathbb{R}}
\usepackage{geometry}
\usepackage{hyperref}
\usepackage[T1]{fontenc}
\usepackage{indentfirst}
\theoremstyle{remark}
\newtheorem{remark}{Remark}
\usepackage{enumitem}

\usepackage{graphicx} 

\usepackage{graphicx} 

\title{Exceptional sets for compositions involving Euler's function,
the sum-of-divisors function and Dedekind's function
}

\author{Aimin Guo$^{*}$}

\date{}

\begin{document}

\maketitle

\vspace{-2.0em}

\begin{center}
\small

\textit{School of Mathematics and Statistics}\\
\textit{Anhui Normal University}\\
\textit{Wuhu 241002, P.~R.~China}

\vspace{0.4em}

$^{*}$Corresponding author:\\
\underline{18339681864@163.com}

\end{center}

\begin{abstract}
Let \(\phi\), \(\psi\), and \(\sigma\) denote Euler's totient function,
Dedekind's arithmetic function, and the sum-of-divisors function,
respectively. We study exceptional sets arising from compositions of
these classical arithmetic functions, with particular emphasis on the
role of small-prime divisibility in the values of the inner functions.
For every fixed \(c>0\), we show that the exceptional sets associated
with \(\phi(\psi(n))\) and \(\phi(\sigma(n))\) have cardinality
\(O_c(x/(\log_3 x)^2)\). In the latter case, this improves the previously
known bound of order \(x/\log_4 x\). We also obtain corresponding
variable-threshold estimates and quantitative density-zero results for
\(\psi(\psi(n))\) and \(\psi(\sigma(n))\), together with consequences
for higher fixed iterates of \(\psi\). Our approach combines
small-prime divisibility with second-moment estimates for the normalized
functions \(\psi(n)/n\) and \(\sigma(n)/n\), providing a common mechanism
for these exceptional-set problems.
\end{abstract}

\noindent\textbf{Keywords:}
compositions of arithmetic functions, exceptional sets,
Dedekind's arithmetic function, small-prime divisibility,
Euler's totient function.

\maketitle

\bigskip
\section{Introduction}

Let \(\phi\), \(\psi\), and \(\sigma\) denote Euler's totient function,
Dedekind's arithmetic function, and the sum-of-divisors function,
respectively. These are classical multiplicative functions whose
Dirichlet series are closely connected with the Riemann zeta function,
and their average and distributional properties form a classical part
of analytic number theory. Their compositions, however, need not
remain multiplicative and may exhibit substantially more complicated
behavior. Studying such compositions therefore provides a natural way
to investigate how arithmetic information carried by the values of one
function is transformed by another.

Compositions and iterates of arithmetic functions have been studied
from several perspectives; see, for example,
\cite{luca,EGPS,József}. Nevertheless, their distribution remains much
less understood than that of the individual functions. As observed by
Dixit and Bhattacharjee~\cite{Bhattacharjee}, even the question of
whether \(\phi(\sigma(n))\) possesses a normal order remains open, and
a general theory describing the finer distribution of compositions of
multiplicative functions is still lacking. It is therefore natural to
seek not only quantitative estimates for individual compositions, but
also arithmetic mechanisms that persist across different examples.

The study of the exceptional behavior of \(\phi(\sigma(n))\) goes back
to Alaoglu and Erd\H{o}s~\cite{erdos}, who proved that, for every fixed
\(c>0\),
\[
\#\{n\le x:\phi(\sigma(n))\ge cn\}=o(x).
\]
More recently, Dixit and Bhattacharjee~\cite{Bhattacharjee} obtained
the quantitative estimate
\[
\#\{n\le x:\phi(\sigma(n))>cn\}
\le
\frac{\pi^2x}{6c\log_4x}
+
O_c\left(
\frac{x\log_3x}
{(\log x)^{1/\log_3x}\log_4x}
\right),
\]
together with a variable-threshold version for non-decreasing
\(f\) satisfying \(f(x)=o(\log_4x)\). A related problem for
\(\phi(\psi(n))\) was studied by S\'andor
\cite{SÁNDOR2,Sándor}, who proved that
\[
\#\{n\le x:\phi(\psi(n))\ge cn\}=o(x).
\]
These results lead naturally to the question of what arithmetic
structure controls such exceptional sets and whether a common
mechanism can be identified across different compositions.

Our starting point is the small-prime divisibility of the values of
\(\psi\) and \(\sigma\). Apart from
\[
O\left(\frac{x}{(\log_3x)^2}\right)
\]
integers \(n\le x\), these values are divisible by every prime power
below a constant multiple of
\[
\frac{\log_2x}{\log_3x}.
\]
This information interacts directly with the Euler products
\[
\frac{\phi(m)}{m}
=
\prod_{p\mid m}\left(1-\frac1p\right),
\qquad
\frac{\psi(m)}{m}
=
\prod_{p\mid m}\left(1+\frac1p\right).
\]
Indeed, if \(m\) is divisible by every prime below \(y\), then
\[
\frac{\phi(m)}{m}
\le
\prod_{p<y}\left(1-\frac1p\right),
\qquad
\frac{\psi(m)}{m}
\ge
\prod_{p<y}\left(1+\frac1p\right).
\]
Thus the same small-prime structure has two complementary effects:
it suppresses \(\phi(m)/m\), while enhancing \(\psi(m)/m\).
By Mertens' theorem, these products are of orders \(1/\log y\) and
\(\log y\), respectively. Since in the present setting
\[
y\asymp\frac{\log_2x}{\log_3x},
\qquad
\log y\sim\log_3x,
\]
the third iterated logarithm arises naturally from the available range
of small-prime divisibility.

Using this viewpoint together with suitable second-moment estimates,
we prove that, for every fixed \(c>0\),
\[
\#\{n\le x:\phi(\psi(n))\ge cn\}
=
O_c\left(\frac{x}{(\log_3x)^2}\right),
\]
and
\[
\#\{n\le x:\phi(\sigma(n))\ge cn\}
=
O_c\left(\frac{x}{(\log_3x)^2}\right).
\]
The first estimate gives a quantitative refinement of Sándor's
density-zero result, while the second improves the previously known
bound of order \(x/\log_4x\). We also establish corresponding
variable-threshold estimates for non-decreasing functions \(g\) and
\(h\) satisfying
\[
g(x),h(x)=o(\log_3x).
\]

The same mechanism applies in the opposite direction. Since small
prime divisors enlarge \(\psi(m)/m\), we obtain quantitative
density-zero estimates for
\[
\{n\le x:\psi(\psi(n))\le cn\}
\quad\text{and}\quad
\{n\le x:\psi(\sigma(n))\le cn\}.
\]
As a consequence, for every fixed \(k\ge2\),
\[
\#\{n\le x:\psi^{(k)}(n)\le cn\}=o(x).
\]

Taken together, these results show that several apparently different
exceptional-set problems are governed by the same arithmetic
phenomenon: the small-prime structure of the values of the inner
function. It suppresses subsequent composition with \(\phi\), while
enhancing subsequent composition with \(\psi\). Thus our results not
only refine earlier exceptional-set estimates, but also identify a
common mechanism through which arithmetic information in the values of
classical multiplicative functions is reflected in their compositions.

Throughout the paper, \(c>0\) is fixed, and all asymptotic estimates are
understood as \(x\to\infty\). We use the standard \(O\)- and
\(\ll\)-notations, with subscripts indicating the parameters on which
the implied constants may depend. Thus, for example,
\(A\ll_c B\) means \(A=O_c(B)\).

We write
\[
\log_1x=\log x,\qquad
\log_{k+1}x=\log(\log_kx)
\quad(k\ge1).
\]
For an arithmetic function \(f\), let \(f^{(k)}\) denote its \(k\)-fold
iterate, defined recursively by
\[
f^{(0)}(n)=n,\qquad
f^{(k)}(n)=f\bigl(f^{(k-1)}(n)\bigr)
\quad(k\ge1).
\]

We now state the main results of this paper.

\begin{theorem}\label{thm:fixed-c}
For every fixed \(c>0\), we have
\begin{align}
\#\{n\le x:\phi(\psi(n))\ge cn\}
&=
O_c\left(\frac{x}{(\log_3x)^2}\right),
\label{main-thm}\\
\#\{n\le x:\phi(\sigma(n))\ge cn\}
&=
O_c\left(\frac{x}{(\log_3x)^2}\right).
\label{eq:phi-sigma-fixed-c}
\end{align}
In particular, both exceptional sets are \(o(x)\).
\end{theorem}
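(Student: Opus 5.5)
Write $m=\psi(n)$ (respectively $m=\sigma(n)$) and factor
\[
\frac{\phi(m)}{n}=\frac{\phi(m)}{m}\cdot\frac{m}{n}.
\]
The plan is to control the two factors separately: the first by small-prime divisibility, the second by a second-moment bound.

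Fix $y=\delta\log_2x/\log_3x$ for a small absolute $\delta>0$. The first step is to show that, outside a set $E_1(x)$ with $\#E_1(x)\ll x/(\log_3x)^2$, every prime $q<y$ divides $m$. For $\psi$, it suffices that $n$ has a prime factor $p\le x$, $p\,\|\,n$, with $p\equiv-1\pmod q$, since then $q\mid p+1\mid\psi(n)$. For $\sigma$, the same condition gives $q\mid p+1=\sigma(p)\mid\sigma(n)$.

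To bound the number of $n\le x$ with no such $p$, I would use a sieve upper bound (Brun or Selberg, or simply Hall--Tenenbaum type estimates for integers free of primes in a set $\mathcal P$). This gives
\[
\#\{n\le x : n \text{ has no prime } p\equiv-1 \ (\mathrm{mod}\ q)\}\ll x\exp\Bigl(-\sum_{p\le x,\ p\equiv-1\ (q)}\frac1p\Bigr).
\]
By Brun--Titchmarsh together with Siegel--Walfisz / Page uniformity (the moduli $q<y\le\log_2x$ are tiny), the exponent is $\ge (1+o(1))\log_2x/(q-1) - O(\log q/q)$. For $q<y$ this exponent is $\gg\log_3x/\delta$, so each such set has size $\ll x(\log_2x)^{-A}$. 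Summing over the $\pi(y)$ primes $q<y$ costs only a factor of $y$. Choosing $\delta$ small makes the total $\ll x/(\log_3 x)^{2}$, in fact much smaller.

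The non-squarefull part $p\,\|\,n$ causes no trouble: the $n$ with $p^2\mid n$ for some $p>\log x$ number $O(x/\log x)$, and restricting to primes $p>\log x$ does not change the reciprocal sum appreciably. The main technical obstacle is this step, namely getting a uniform lower bound for $\sum 1/p$ over $p\equiv-1\pmod q$ with $q$ up to $y$ and the needed $(\log_3x)^{-2}$ saving. I would use the Siegel--Walfisz theorem, which is uniform for $q\le(\log x)^{A}$, so no exceptional-zero issue arises here.

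Off $E_1(x)$, Mertens' theorem gives
\[
\frac{\phi(m)}{m}\le\prod_{q<y}\Bigl(1-\frac1q\Bigr)\ll\frac1{\log y}\ll\frac1{\log_3x}.
\]
Hence $\phi(m)\ge cn$ forces $m/n\ge C c\log_3x$ for an absolute constant $C>0$. Since $\psi(n)/n\le\sigma(n)/n$, it suffices to bound the $n$ with $\sigma(n)/n\ge Cc\log_3x$. Here I would use the classical second moment
\[
\sum_{n\le x}\Bigl(\frac{\sigma(n)}n\Bigr)^2\ll x,
\]
which follows from writing $\sigma(n)/n=\sum_{d\mid n}1/d$, expanding the square and summing, giving $x\sum_{d,e}1/(de\,[d,e])\ll x$. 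Chebyshev's inequality then bounds the count by $\ll x/(c\log_3x)^2$.

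Adding the contribution of $E_1(x)$ yields $O_c(x/(\log_3x)^2)$ for both \eqref{main-thm} and \eqref{eq:phi-sigma-fixed-c}.
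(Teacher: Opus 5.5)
Your proof is correct and has the same skeleton as the paper's. Divisibility of $m$ by every prime below $y\asymp\log_2x/\log_3x$, combined with Mertens, gives $\phi(m)/m\ll 1/\log_3x$, and Chebyshev's inequality with a bounded second moment handles $m/n$.

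The real difference is the divisibility input. The paper quotes Lemmas~\ref{div-lemma} and~\ref{div-lemma1}, which are prime-power statements from the literature with exceptional set $O(x/(\log_3x)^2)$, even though only primes are ever used. You prove the prime statement directly, by the same Siegel--Walfisz plus sieve argument the paper uses for $T_p(x)$ in Lemma~\ref{main1-lemma}, made uniform in $q<y$. That uniformity is legitimate because $q<\log_2x\le\log t$ for $t\ge\log x$; Brun--Titchmarsh is not needed. You handle $\sigma$ by discarding the $n$ with $p^2\mid n$ for some $p>\log x$. This makes the argument self-contained and gives an exceptional set $\ll x(\log_2x)^{1-1/\delta+o(1)}$, far below $x/(\log_3x)^2$.

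Your moment step is a mild simplification. Using $\psi(n)\le\sigma(n)$ reduces both cases to one moment, which you bound by expanding $\sigma(n)/n=\sum_{d\mid n}1/d$ rather than through the M\"obius-inversion computation of Lemma~\ref{lem:second-moments}.

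Your route also pays off in a way the paper's does not. The factor $(\log_3x)^{-2}$ now comes only from Chebyshev's inequality. The $k$-th moment is also $\ll_k x$ by the same expansion, so it gives $O_{c,k}(x/(\log_3x)^k)$ for every fixed $k$. The paper's concluding remark, which rests on the weaker cited exceptional set, is therefore too pessimistic. What the paper's version buys in exchange is only brevity, given the citations.
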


The estimate \eqref{main-thm} gives a quantitative refinement
of Sándor's density result~\cite{Sándor}, while
\eqref{eq:phi-sigma-fixed-c} improves a result of Dixit and
Bhattacharjee~\cite{Bhattacharjee}. Motivated by the work of Dixit and Bhattacharjee~\cite{Bhattacharjee},
we next replace the fixed thresholds by variable thresholds involving non-decreasing functions. For non-decreasing functions
\(g,h:\R^+ \to \R^+\), define
\[
\begin{cases}
G_g(x) := \left\{ n \leq x : \phi(\psi(n)) \geq \dfrac{n}{g(n)} \right\}, \\[1em]
H_h(x) := \left\{ n \leq x : \phi(\sigma(n)) \geq \dfrac{n}{h(n)} \right\}.
\end{cases}
\]

We have the following result.

\begin{theorem}\label{thm-2}
Suppose that \(g,h:\mathbb{R}^+\to\mathbb{R}^+\) are non-decreasing
functions satisfying
\[
g(x)=o(\log_3x),
\qquad
h(x)=o(\log_3x).
\]
Then, as \(x\to\infty\),
\[
|G_g(x)|
=
O\left(
\frac{(1+g(x)^2)x}{(\log_3x)^2}
\right)
=o(x),
\]
and
\[
|H_h(x)|
=
O\left(
\frac{(1+h(x)^2)x}{(\log_3x)^2}
\right)
=o(x).
\]
\end{theorem}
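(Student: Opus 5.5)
We will run the same argument that underlies Theorem~\ref{thm:fixed-c}, but carry the threshold as a variable and track how the bound depends on it. Since \(g\) is non-decreasing, for \(n\le x\) we have \(n/g(n)\ge n/g(x)\). Hence
\[
G_g(x)\subseteq\{n\le x:\phi(\psi(n))\ge n/g(x)\},
\]
and the analogous inclusion holds for \(H_h(x)\). It therefore suffices to bound \(\#\{n\le x:\phi(F(n))\ge n/T\}\) for \(F\in\{\psi,\sigma\}\), uniformly in \(1\le T=o(\log_3x)\). We may assume \(T\ge1\); otherwise we replace \(g\) by \(\max(g,1)\), which only enlarges the set.

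First we handle small primes. Put \(y=\kappa\log_2x/\log_3x\) for a suitable constant \(\kappa\). Let \(E_1(x)\) be the set of \(n\le x\) such that \(F(n)\) fails to be divisible by some prime \(p<y\). The small-prime divisibility input described in the introduction gives \(|E_1(x)|=O(x/(\log_3x)^2)\). The idea is that for each \(p<y\) the number of \(n\le x\) having no prime factor \(q\le x\) with \(q\equiv -1 \pmod p\), apart from a bounded power, is at most \(x\exp(-c'\log_2x/(p-1))\); summing over \(p<y\) is small. For \(n\notin E_1(x)\), Mertens' theorem gives
\[
\frac{\phi(F(n))}{F(n)}\le\prod_{p<y}\Bigl(1-\frac1p\Bigr)\le\frac{C_0}{\log y}\le\frac{C_1}{\log_3x}.
\]
So the condition \(\phi(F(n))\ge n/T\) forces
\[
\frac{F(n)}{n}\ge\frac{\log_3x}{C_1T}=:Y,
\]
and \(Y\to\infty\) because \(T=o(\log_3x)\).

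Next comes the second-moment step. We use the standard bound \(\sum_{n\le x}(F(n)/n)^2\ll x\), which holds for both \(\psi\) and \(\sigma\) because \((F(n)/n)^2=\sum_{d\mid n}a(d)\) with \(\sum_d a(d)/d<\infty\). Chebyshev's inequality then gives
\[
\#\{n\le x:F(n)/n\ge Y\}\ll\frac{x}{Y^2}\ll\frac{T^2x}{(\log_3x)^2}.
\]
Adding the two contributions gives
\[
|G_g(x)|\ll\frac{(1+g(x)^2)\,x}{(\log_3x)^2},
\]
and the same bound for \(H_h(x)\). The right-hand side is \(o(x)\) because \(g(x),h(x)=o(\log_3x)\). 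All implied constants are absolute, since \(\kappa\), \(C_1\) and the second-moment constant do not depend on \(T\).

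The main obstacle is the small-prime step: we need \(|E_1(x)|\ll x/(\log_3x)^2\) uniformly in \(p<y\), with \(y\) as large as a constant multiple of \(\log_2x/\log_3x\). This is presumably the lemma behind Theorem~\ref{thm:fixed-c}, and here we would simply invoke it. If it had to be proved, the approach would be Brun–Titchmarsh together with a sieve bound for integers avoiding the set of primes \(\equiv-1\pmod p\); its relative density is \(\approx1/(p-1)\), which gives the factor \(\exp(-c'\log_2x/p)\). For \(\sigma\) one must also treat squarefull parts, i.e. prime powers \(q^a\) with \(a\ge2\). These contribute \(O(x/z)\) for a cutoff \(z\), which is harmless.
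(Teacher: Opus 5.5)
Your proposal is correct and follows the paper's proof essentially step for step. You reduce $g(n)$ to $g(x)$ by monotonicity, discard the $O(x/(\log_3x)^2)$ exceptional set from Lemma~\ref{div-lemma} (resp.\ Lemma~\ref{div-lemma1}), use Mertens' theorem to force $F(n)/n\gg \log_3x/g(x)$, and finish with Chebyshev's inequality and the second-moment bound of Lemma~\ref{lem:second-moments}; your normalization $T\ge1$ and the remark $Y\to\infty$ are harmless but not needed.
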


We next consider exceptional sets associated with compositions
involving Dedekind's arithmetic function.

\begin{theorem}\label{thm-4}

  For every fixed \(c>0\), there exists a constant \(z_c>0\), depending only
on \(c\), such that
\[
\#\bigl\{n\le x:\psi(\psi(n))\le cn\bigr\}
=
O_c\left(
x\left(\frac{\log_2 x}{\log x}\right)^{1/z_c}
\right),
\]
and
\[
\#\bigl\{n\le x:\psi(\sigma(n))\le cn\bigr\}
=
O_c\left(
x\left(\frac{\log_2 x}{\log x}\right)^{1/z_c}
\right).
\]
In particular,
\[
\#\bigl\{n\le x:\psi(\psi(n))\le cn\bigr\}=o(x),
\]
and
\[
\#\bigl\{n\le x:\psi(\sigma(n))\le cn\bigr\}=o(x).
\]
\end{theorem}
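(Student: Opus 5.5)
We would derive both estimates from one small-prime divisibility criterion. The point is that the inner value \(m\) only has to be divisible by a \emph{fixed} finite set of primes, so the exceptional set is controlled by a sieve count with a saving that is a power of \(\log x\).

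\textbf{Reduction to finitely many primes.} Put \(m=\psi(n)\) (respectively \(m=\sigma(n)\)). Then
\[
\frac{\psi(\psi(n))}{n}=\frac{\psi(n)}{n}\cdot\frac{\psi(m)}{m}\ge\frac{\psi(m)}{m},
\qquad
\frac{\psi(\sigma(n))}{n}=\frac{\sigma(n)}{n}\cdot\frac{\psi(m)}{m}\ge\frac{\psi(m)}{m},
\]
since \(\psi(n)\ge n\) and \(\sigma(n)\ge n\). Because \(\prod_{p<y}(1+1/p)\to\infty\), we fix \(y_c\), depending only on \(c\), with \(\prod_{p<y_c}(1+1/p)>c\). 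If every prime \(p<y_c\) divides \(m\), then \(\psi(m)/m>c\), so \(n\) is not exceptional. Hence the exceptional set is contained in
\[
\bigcup_{p<y_c}\{n\le x:\ p\nmid m\}.
\]
This is a union of boundedly many sets (in terms of \(c\)), so it suffices to bound each of them.

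\textbf{Arithmetic criterion.} Fix a prime \(p<y_c\). If \(q\,\|\,n\) with \(q\equiv-1\pmod p\), then \(q+1\) divides both \(\psi(n)\) and \(\sigma(n)\), so \(p\) divides them. Consequently, \(p\nmid m\) forces \(n\) to have no prime factor \(q\equiv-1\pmod p\) appearing to exactly the first power. (For \(p=2\), such \(q\) are all odd primes, and the same argument applies.)

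\textbf{Factorisation.} Write \(n=uv\), where:
\begin{itemize}
\item \(u\) is the part of \(n\) supported on primes \(q\equiv-1\pmod p\); by the criterion, \(u\) is squarefull;
\item \(v\) has no prime factor \(q\equiv-1\pmod p\).
\end{itemize}
For \(u>\sqrt x\), the number of such \(n\) is at most \(x\sum_{u>\sqrt x,\ u\text{ squarefull}}1/u\ll x^{3/4}\).

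\textbf{Sieve step.} For \(u\le\sqrt x\), we apply an upper-bound sieve (Brun's sieve, or the Halberstam--Richert form of Selberg's sieve) to the integers \(v\le X=x/u\), sifting by the primes \(q\equiv-1\pmod p\) up to a level \(z\). This gives
\[
\#\{v\le X\}\ll X\prod_{\substack{q<z\\ q\equiv-1\ (p)}}\Bigl(1-\frac1q\Bigr)\ll_p X(\log z)^{-1/(p-1)}.
\]
The last bound is Mertens' theorem in arithmetic progressions, \(\sum_{q<z,\,q\equiv-1(p)}1/q=\frac{1}{p-1}\log_2z+O_p(1)\). Summing over squarefull \(u\le\sqrt x\) costs only a convergent factor \(\sum 1/u\).

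\textbf{Choice of the sifting level.} To keep the sieve error terms harmless uniformly in \(u\le\sqrt x\), we choose something like \(z=X^{1/\log_2x}\ge x^{1/(2\log_2x)}\). Then \(\log z\gg\log x/\log_2x\), and we obtain
\[
\#\{n\le x:\ p\nmid m\}\ll_p x\Bigl(\frac{\log_2x}{\log x}\Bigr)^{1/(p-1)}+x^{3/4}.
\]
Taking \(z_c=\max_{p<y_c}(p-1)\) and summing over \(p<y_c\) gives both statements of Theorem~\ref{thm-4}; the \(o(x)\) claims follow immediately.

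\textbf{Main obstacle.} The delicate step is the uniform upper-bound sieve with the correct exponent \(1/(p-1)\). Its dimension is \(\kappa=1/(p-1)\), and we need the bound to hold uniformly in \(X=x/u\) across the whole range of \(u\). We would first try the fundamental lemma of the sieve at level \(z=X^{1/\log_2x}\). The loss of \(\log_2x\) inside the logarithm is exactly what produces the stated factor \((\log_2x/\log x)^{1/z_c}\), rather than the sharper \((\log x)^{-1/(p-1)}\) one would get from Wirsing's theorem.
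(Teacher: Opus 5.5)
Your proof is correct. Its skeleton matches the paper's: reduce to the finitely many primes $p<y_c$, then bound $\#\{n\le x:p\nmid m\}$ by sieving out the primes $q\equiv-1\pmod p$. The difference lies in how that sieve bound is obtained.

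\textbf{The paper's route.} For $\psi$, the paper uses the stronger observation that $\psi(q^a)=q^{a-1}(q+1)$. So $q\mid n$ with $q\equiv-1\pmod p$, to any exponent, already forces $p\mid\psi(n)$, and no squarefull part ever appears. It then applies the Pollack--Pomerance bound $O(e^{-B(x)})$ to the primes $q\equiv-1\pmod p$ in $(\log x,x)$, computing $B(x)$ by Siegel--Walfisz and partial summation. This gives Lemma~\ref{main1-lemma} with an absolute constant, uniformly in $p$ for $x\ge e^p$. There, the $(\log_2x)^{1/(p-1)}$ loss comes from starting the sieving range at $\log x$, not from a restricted sifting level. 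For $\sigma$, the paper gives no argument of its own and simply quotes the Dixit--Bhattacharjee bound for $S_p(x)$.

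\textbf{Your route.} You use the weaker criterion $q\,\|\,n$, which works for $\psi$ and $\sigma$ simultaneously. You pay for it with the decomposition $n=uv$, $u$ squarefull. That cost is harmless: $\sum 1/u$ over squarefull $u$ converges, and the tail $u>\sqrt x$ costs only $x^{3/4}$. Beyond that you need only Mertens' theorem in progressions to a fixed modulus and a standard fundamental-lemma sieve. Your sieve step could equally be done with Lemma~\ref{sieve-lemma} applied to $v\le x/u$.

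\textbf{Comparison.} Your argument is self-contained in the $\sigma$ case and avoids Siegel--Walfisz. The price is $p$-dependent constants, which is irrelevant here because only $p<y_c$ occur. The paper's version is shorter for $\psi$ and uniform in $p$.

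One cosmetic point: choose $y_c\ge3$ so that $z_c=\max_{p<y_c}(p-1)$ is defined. For $c<1$ both exceptional sets are empty anyway, since $\psi(m)\ge m$, $\psi(n)\ge n$ and $\sigma(n)\ge n$.
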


\begin{corollary}\label{cor:iterations}
Let $k\geq2$ be fixed. Then, for every fixed $c>0$,
\[
\#\{n\le x:\psi^{(k)}(n)\le cn\}=o(x).
\]
\end{corollary}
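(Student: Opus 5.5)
The plan is to deduce Corollary~\ref{cor:iterations} from Theorem~\ref{thm-4} (the case of \(\psi(\psi(n))\)) by monotonicity, using that \(\psi(m)\ge m\) for every \(m\ge1\), since \(\psi(m)/m=\prod_{p\mid m}(1+1/p)\ge1\).

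\emph{Step 1: a monotonicity inequality.} By induction on \(j\), every iterate satisfies \(\psi^{(j)}(n)\ge n\). Applying this with \(m=\psi^{(2)}(n)\) in place of \(n\) gives, for every \(k\ge2\),
\[
\psi^{(k)}(n)=\psi^{(k-2)}\bigl(\psi^{(2)}(n)\bigr)\ge \psi^{(2)}(n)=\psi(\psi(n)).
\]

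\emph{Step 2: inclusion of sets.} If \(\psi^{(k)}(n)\le cn\), then Step~1 gives \(\psi(\psi(n))\le cn\). Hence
\[
\{n\le x:\psi^{(k)}(n)\le cn\}\subseteq\{n\le x:\psi(\psi(n))\le cn\}.
\]

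\emph{Step 3: counting.} By Theorem~\ref{thm-4}, the larger set has cardinality
\[
O_c\Bigl(x\bigl(\log_2x/\log x\bigr)^{1/z_c}\Bigr)=o(x).
\]
The same bound therefore holds for the smaller set, uniformly in \(k\ge2\).

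There is no real obstacle. The only point to check is that \(\psi\) is non-decreasing along iteration, i.e.\ \(\psi(m)\ge m\). We never need monotonicity of \(\psi\) as a function of its argument, because the inequality is applied only at the outermost level. This proves the corollary with the explicit rate of Theorem~\ref{thm-4}, and the rate does not depend on \(k\).
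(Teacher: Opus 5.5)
Your proposal is correct and is essentially the paper's own argument: you use $\psi(m)\ge m$ to get $\psi^{(k)}(n)\ge\psi(\psi(n))$, deduce the inclusion $\{n\le x:\psi^{(k)}(n)\le cn\}\subseteq\{n\le x:\psi(\psi(n))\le cn\}$, and apply Theorem~\ref{thm-4}. Your added observation that the resulting bound is uniform in $k\ge2$ is also correct, since the bound for the larger set does not depend on $k$.
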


\bigskip

\section{\bf Preliminaries}
\bigskip
The proofs of our main results use four main ingredients: product estimates
over primes, sieve estimates for integers avoiding prescribed prime
divisors, results on small-prime divisibility of the values of
\(\psi(n)\) and \(\sigma(n)\), and second-moment estimates for
\(\psi(n)/n\) and \(\sigma(n)/n\). We first record the required auxiliary
results and establish the second-moment estimates needed below. For the
proof of Theorem~\ref{thm-4}, we also establish an upper bound for the
number of integers \(n\le x\) for which \(\psi(n)\) is not divisible by
a prescribed prime. The proof of the latter estimate adapts the sieve
argument of Dixit and Bhattacharjee~\cite{Bhattacharjee}, originally used
in their study of \(\phi(\sigma(n))\).

\begin{lemma}\label{sieve1-lemma}
For sufficiently large \(x\), we have
\[ \log\prod_{p\le x}\left ( 1+\frac{1}{p} \right )  = \log\left ( \frac{6e^{\gamma } }{\pi^{2}}\log x  \right )+o\left ( 1 \right ).\]
In particular, this implies that
\[
\prod_{p\le x}\Bigl(1+\frac1p\Bigr) > \log x.
\]
\end{lemma}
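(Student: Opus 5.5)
We will derive the lemma from Mertens' third theorem and the Euler product for \(\zeta(2)\). The key identity is
\[
1+\frac1p=\frac{1-p^{-2}}{1-p^{-1}},
\]
so that
\[
\prod_{p\le x}\Bigl(1+\frac1p\Bigr)
=\prod_{p\le x}\Bigl(1-\frac1{p^2}\Bigr)\cdot\prod_{p\le x}\Bigl(1-\frac1p\Bigr)^{-1}.
\]

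We treat the two factors separately.

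\emph{The \(\zeta(2)\) factor.} The first product converges to \(1/\zeta(2)=6/\pi^2\). Its tail satisfies
\[
\prod_{p>x}\bigl(1-p^{-2}\bigr)^{-1}=\exp\bigl(O(1/x)\bigr),
\]
since \(\sum_{p>x}p^{-2}\le\sum_{m>x}m^{-2}\ll 1/x\).

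\emph{The Mertens factor.} By Mertens' theorem,
\[
\prod_{p\le x}\Bigl(1-\frac1p\Bigr)^{-1}=e^{\gamma}\log x\,\bigl(1+o(1)\bigr).
\]

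Multiplying the two factors and taking logarithms gives
\[
\log\prod_{p\le x}\Bigl(1+\frac1p\Bigr)=\log\Bigl(\frac{6e^\gamma}{\pi^2}\log x\Bigr)+o(1),
\]
which is the first assertion.

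\emph{The inequality.} Numerically, \(6e^\gamma/\pi^2\approx 6\cdot1.78107/9.8696\approx1.0828>1\). The first assertion therefore gives
\[
\log\prod_{p\le x}\Bigl(1+\frac1p\Bigr)-\log\log x=\log(1.0828\ldots)+o(1).
\]
The constant \(\log(1.0828\ldots)\approx 0.0795\) is positive, so the left side is positive once the \(o(1)\) term is smaller than \(0.0795\). This holds for all sufficiently large \(x\), giving \(\prod_{p\le x}(1+1/p)>\log x\).

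The only delicate point is this last step. The margin \(6e^\gamma/\pi^2-1\approx0.08\) is small, so the inequality holds only for large \(x\), and that is all the lemma claims. If an explicit threshold were wanted, one would substitute an effective form of Mertens' theorem, for instance Rosser--Schoenfeld's bound \(\prod_{p\le x}(1-1/p)^{-1}>e^\gamma\log x\,(1-1/\log^2x)\). Together with the trivial bound \(\prod_{p\le x}(1-p^{-2})>6/\pi^2\), this gives the inequality once \(1-1/\log^2 x>\pi^2/(6e^\gamma)\).
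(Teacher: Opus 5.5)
Your proof is correct. The paper gives no argument of its own here: it only cites Littlewood's Lemma~1.

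You instead derive the asymptotic directly from three ingredients: the factorization $1+\frac1p=(1-p^{-2})(1-p^{-1})^{-1}$, Mertens' third theorem, and the Euler product $\prod_p(1-p^{-2})=6/\pi^2$ with its $O(1/x)$ tail. This is the standard argument behind the cited result, so your version is self-contained at the cost of a few lines.

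You also make explicit something the paper leaves tacit. The ``in particular'' clause rests on the numerical fact $6e^{\gamma}/\pi^2\approx1.083>1$. Because the margin is only about $0.08$, the $o(1)$ form gives the inequality only beyond an unspecified point.

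Your Rosser--Schoenfeld remark, combined with the trivial bound $\prod_{p\le x}(1-p^{-2})>6/\pi^2$, removes that vagueness: it suffices that $\log x>3.62$ or so. This matters downstream. The proof of Theorem~\ref{thm-4} relies on an unspecified absolute constant $C_0$ beyond which this inequality holds. Your explicit threshold, together with a direct check of the primes up to $37$, shows that $C_0$ can be taken explicit and small.
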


\textbf{Proof.} This result is essentially derived from Littlewood \cite[Lemma 1]{Littlewood}.

\begin{lemma}\label{sieve-lemma} Let $K$ be a set of primes, and for $x > 1$, define \begin{equation*} B(x) = \sum_{\substack{p \leq x \\ p \in K}} \frac{1}{p}. \end{equation*} Then, uniformly for any choice of $K$, the proportion of integers $n \leq x$ that are free of prime factors from $K$ is bounded by $O\left(e^{-B(x)}\right)$. 
\end{lemma}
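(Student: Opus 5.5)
This is a standard upper-bound sieve estimate, and I would prove it by a Rankin-type (multiplicative moment) argument rather than by a combinatorial sieve. Fix \(x\) and let \(K_x=\{p\le x: p\in K\}\). I introduce the multiplicative function \(f\) with \(f(p^a)=0\) for \(p\in K_x\) and \(f(p^a)=1\) otherwise. The quantity to bound is then \(\sum_{n\le x} f(n)\), and the goal is
\[
\sum_{n\le x} f(n)\ll x\prod_{p\in K_x}\Bigl(1-\frac1p\Bigr).
\]
This suffices because \(\log\prod_{p\in K_x}(1-1/p)=-B(x)+O\bigl(\sum_p p^{-2}\bigr)=-B(x)+O(1)\). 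The implied constant is absolute, which gives the uniformity in \(K\).

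The main tool is the Hall--Tenenbaum / Halberstam--Richert type inequality for nonnegative multiplicative functions \(f\) with \(0\le f\le 1\):
\[
\sum_{n\le x} f(n)\ll \frac{x}{\log x}\sum_{n\le x}\frac{f(n)}{n}.
\]
I would prove it directly as follows.
\begin{itemize}
\item Write \(\log n=\sum_{p^a\| n}\log p^a\).
\item This gives \(\sum_{n\le x} f(n)\log n\le \sum_{p^a\le x}\log p^a\sum_{m\le x/p^a} f(m)\).
\item Summing over \(m\) first and using Chebyshev's bound \(\sum_{p^a\le y}\log p^a\ll y\), this is \(\ll x\sum_{m\le x}f(m)/m\).
\item Finally, \(\sum_{n\le x} f(n)\le \sum_{n\le \sqrt x}1+\frac{2}{\log x}\sum_{n\le x}f(n)\log n\).
\end{itemize}

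Next, Euler products give
\[
\sum_{n\le x}\frac{f(n)}{n}\le\prod_{p\le x,\,p\notin K}\Bigl(1-\frac1p\Bigr)^{-1}=\prod_{p\le x}\Bigl(1-\frac1p\Bigr)^{-1}\prod_{p\in K_x}\Bigl(1-\frac1p\Bigr).
\]
By Mertens' theorem the first factor is \(\asymp\log x\), and it cancels the \(1/\log x\). Combining the steps yields \(\sum_{n\le x}f(n)\ll \sqrt{x}+x\,e^{-B(x)}\).

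The main obstacle is the leftover \(\sqrt x\) term. It is harmless since \(e^{-B(x)}\ge e^{-\log_2x-O(1)}\gg 1/\log x\), because \(B(x)\le\sum_{p\le x}1/p=\log_2 x+O(1)\). Hence \(\sqrt{x}\ll x e^{-B(x)}\), and the proportion is \(O(e^{-B(x)})\) uniformly in \(K\). Alternatively, the statement follows at once from the upper-bound form of Brun's or Selberg's sieve, or from Hall--Tenenbaum's theorem on mean values of multiplicative functions, which could simply be cited.
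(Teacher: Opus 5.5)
Your proof is correct, and it takes a different route from the paper only in that the paper gives no argument. It simply cites Lemma~3 of Pollack and Pomerance. Estimates of this kind are usually deduced there from an upper-bound sieve or from a mean-value theorem for nonnegative multiplicative functions. Your proposal is a self-contained version of the second route:
\begin{itemize}
\item The $\log n$ weighting, together with $\sum_{p^a\le y}\log p^a\ll y$, gives the Halberstam--Richert-type inequality for $0\le f\le 1$.
\item The Euler product bound on $\sum_{n\le x}f(n)/n$ and Mertens' theorem then cancel the factor $1/\log x$.
\item Your disposal of the leftover $\sqrt{x}$ via $e^{-B(x)}\gg 1/\log x$ is valid.
\end{itemize}
Two points are worth stating explicitly. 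First, $\sum_{p^a\le y}\log p^a=\sum_{p^a\le y}a\log p$ is not literally Chebyshev's $\psi(y)$. It is still $\ll y$, because the terms with $p\le\sqrt{y}$ contribute only $O\bigl(\sqrt{y}(\log y)^2\bigr)$. Second, bounded $x$ is trivial, since then $B(x)$ is bounded and the proportion is at most $1$.

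What your approach buys over the citation is transparency. Every implied constant comes from Chebyshev's and Mertens' estimates, so the uniformity in $K$ is evident rather than inherited from a reference. You also obtain the slightly sharper bound $x\prod_{p\in K_x}(1-1/p)$, which is at most $xe^{-B(x)}$. The citation buys only brevity.
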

\textbf{Proof.} This result essentially follows from Pollack and Pomerance \cite[Lemma 3]{pollack}.

\begin{lemma}[Siegel-Walfisz]\label{siegel}
    Let $(b, q) = 1$, and let $\pi(x; q, b)$ denote the count of primes \( p \leq x \) such that \( p \equiv b \pmod{q} \). For any given \( B > 0 \), if \( q \leq (\log x)^B \), then
    \begin{equation*}
        \pi(x; q, b) = \frac{li(x)}{\phi(q)} + O\left(x \exp(-c_3 \sqrt{\log x})\right),
    \end{equation*}
    where the implied constant depends only on \( B \), and \( li(x) := \int_2^x \frac{1}{\log u} \, du \).
   \end{lemma}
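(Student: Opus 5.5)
This is the Siegel--Walfisz theorem. It is a classical result, so in the paper it would be cited (e.g.\ from Davenport's \emph{Multiplicative Number Theory}, Ch.~22) rather than reproved. If I had to prove it, I would follow the standard route through Dirichlet characters and the explicit formula.

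\emph{Reduction to characters and to \(\psi\)-type sums.} By orthogonality of characters modulo \(q\),
\[
\sum_{\substack{n\le x\\ n\equiv b\ (\mathrm{mod}\ q)}}\Lambda(n)=\frac{1}{\phi(q)}\sum_{\chi}\overline{\chi}(b)\,\psi(x,\chi),
\qquad
\psi(x,\chi)=\sum_{n\le x}\chi(n)\Lambda(n).
\]
I will show \(\psi(x,\chi_0)=x+O(x e^{-c\sqrt{\log x}})\) and \(\psi(x,\chi)\ll x e^{-c\sqrt{\log x}}\) for \(\chi\ne\chi_0\), uniformly for \(q\le(\log x)^B\). Partial summation then passes from \(\Lambda\) to the prime-counting function. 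Prime powers contribute \(O(\sqrt{x})\), and the main term becomes \(li(x)/\phi(q)\).

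\emph{Explicit formula and zero-free region.} For each \(\chi\) I use the truncated explicit formula
\[
\psi(x,\chi)=\delta_\chi x-\sum_{|\gamma|\le T}\frac{x^\rho}{\rho}+O\Bigl(\frac{x\log^2(qxT)}{T}\Bigr),
\]
together with the classical zero-free region \(\sigma>1-c/\log(q(|t|+2))\). This region holds for all zeros except possibly one real exceptional zero \(\beta_1\) of a single real character. Choosing \(T=e^{\sqrt{\log x}}\), and using that \(\log q\ll\log\log x\), the non-exceptional zeros contribute \(O(x e^{-c'\sqrt{\log x}})\).

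\emph{The main obstacle: the exceptional zero.} The only remaining term is \(x^{\beta_1}/\beta_1\), and controlling it is the heart of the argument. Here I invoke Siegel's theorem: for every \(\varepsilon>0\) there is an (ineffective) \(C(\varepsilon)\) with \(\beta_1<1-C(\varepsilon)q^{-\varepsilon}\). I take \(\varepsilon=1/(2B)\). Since \(q\le(\log x)^B\), we have \(q^{-\varepsilon}\ge(\log x)^{-1/2}\), and hence
\[
x^{\beta_1}\le x\exp\bigl(-C(\varepsilon)\sqrt{\log x}\bigr).
\]
Siegel's theorem itself is proved by comparing \(L(1,\chi_1)L(1,\chi_2)\) via the Dedekind zeta function of a biquadratic field. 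This step is why the constant \(c_3\), or equivalently the implied constant, is ineffective and depends on \(B\).

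Combining the three contributions gives the stated asymptotic uniformly in \(q\le(\log x)^B\).
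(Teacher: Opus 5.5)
Your proposal is correct and matches the paper, which simply cites the Siegel--Walfisz theorem (Montgomery--Vaughan, Corollary 11.21) without proof. The route you sketch (orthogonality, truncated explicit formula, Landau--Page zero-free region, Siegel's theorem for the exceptional zero) is the standard textbook proof behind that citation.

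One small refinement concerns your choice $\varepsilon=1/(2B)$. It gives a saving of $\exp(-C(\varepsilon)\sqrt{\log x})$ for the exceptional zero, so the exponent constant would depend on $B$, whereas the statement keeps $c_3$ fixed and puts the $B$-dependence only in the implied constant. Taking $\varepsilon=1/(4B)$ instead gives $x^{\beta_1}\le x\exp(-C(\varepsilon)(\log x)^{3/4})\ll_B x\exp(-c_3\sqrt{\log x})$, which matches the statement exactly.
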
 
\textbf{Proof.} This result is essentially based on the methods outlined in Multiplicative Number Theory I: Classical Theory \cite[Corollary 11.21]{Montgomery}.

For a prime \(p\), define
\[
T_p(x):=\#\{n\le x:p\nmid\psi(n)\}.
\]
The following estimate is the key ingredient in the proof of
Theorem~\ref{thm-4}.

\begin{lemma}\label{main1-lemma}
    For any prime $p$ and $x\geq e^p$
\begin{equation*}
    T_p(x) = O\left(x \left(\frac{\log\log x}{\log x}\right)^{\frac{1}{p-1}}\right),
\end{equation*}
where the implied constant is absolute.
\end{lemma}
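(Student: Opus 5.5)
The idea is that \(p\mid\psi(n)\) as soon as \(n\) has a prime factor \(q\) exactly dividing \(n\) with \(q\equiv -1\pmod p\), since then \(q+1\mid\psi(n)\). So if \(p\nmid\psi(n)\), then \(n\) has no such "linear" prime factor. The set \(K\) of primes \(q\equiv-1\pmod p\) has relative density \(1/(p-1)\) among the primes. The plan is to show that integers avoiding \(K\), apart from squareful exceptions, form a proportion \(\approx(\log x)^{-1/(p-1)}\), via Lemma~\ref{sieve-lemma} and Lemma~\ref{siegel}.

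\textbf{Step 1 (reduction).} Write \(n=ab\) with \(b\) squarefree, \(a\) squarefull and \((a,b)=1\). Then \(\psi(b)\mid\psi(n)\), and \(\psi(b)=\prod_{q\mid b}(q+1)\). Hence \(p\nmid\psi(n)\) forces \(b\) to have no prime factor in \(K\). Rather than splitting off \(a\) exactly, I would use a cheaper split. Either \(n\) is divisible by \(q^2\) for some \(q\in K\) with \(q>y\), or every \(q\in K\cap(y,x]\) dividing \(n\) is actually absent. Here \(y\) is a parameter.

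\textbf{Step 2 (counting).} The first class has size at most \(\sum_{q>y}x/q^2\ll x/y\). For the second class, \(n\) has no prime factor from \(K_y:=\{q\in K: y<q\le x\}\), so Lemma~\ref{sieve-lemma} bounds it by \(O(x e^{-B})\) with
\[
B=\sum_{\substack{y<q\le x\\ q\equiv -1\ (p)}}\frac1q .
\]

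\textbf{Step 3 (the key estimate).} By Lemma~\ref{siegel} with modulus \(p\), and partial summation,
\[
\sum_{\substack{q\le t\\ q\equiv-1\ (p)}}\frac1q=\frac{\log_2 t}{p-1}+O_p(1).
\]
This would give \(B\ge \frac{1}{p-1}(\log_2x-\log_2y)-O(1)\) and hence \(e^{-B}\ll(\log y/\log x)^{1/(p-1)}\).

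\textbf{Step 4 (choice of \(y\)).} Choose \(y=(\log x)^{2}\), or any \(y\) with \(x/y\) negligible. Then \(\log y\asymp\log_2x\), and the total is
\[
O\!\left(x(\log_2x/\log x)^{1/(p-1)}+x/(\log x)^2\right).
\]
Since \(1/(p-1)\le1\), the first term dominates and the bound follows.

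\textbf{Main obstacle: uniformity in \(p\).} The constant in the prime sum over the progression must not depend on \(p\), because the lemma claims an absolute implied constant for all \(x\ge e^p\). Siegel--Walfisz by itself gives \(p\)-dependent (ineffective) constants. I would instead use Brun--Titchmarsh for the upper range and an explicit lower bound for \(\sum 1/q\) over \(q\equiv-1\pmod p\) in \([y,x]\). Alternatively, I would start \(y\) at \(\max(p^2,(\log x)^2)\) and use \(p\le\log x\) (from \(x\ge e^p\)) to keep \(p\le(\log t)^B\) throughout the relevant range of Lemma~\ref{siegel}. Then the error terms contribute \(O(1)\) uniformly, since \(\sum_{t>y}\exp(-c\sqrt{\log t})/t\)-type tails are bounded. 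The restriction \(x\ge e^p\) is exactly what makes this work. If it still fails for small ranges, the trivial bound \(T_p(x)\le x\) covers any bounded set of \(p\).
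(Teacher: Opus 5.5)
Your overall route is the paper's: sieve out the primes $q\equiv-1\pmod p$ above a power of $\log x$, evaluate their reciprocal sum by Lemma~\ref{siegel} and partial summation, and apply Lemma~\ref{sieve-lemma}. The squarefull split in Step~1 is harmless but unnecessary. Since $\psi(q^a)=q^{a-1}(q+1)$, any $q\equiv-1\pmod p$ dividing $n$ to any power already gives $p\mid\psi(n)$, so the paper sieves directly and has no $x/y$ term; multiplicity matters for $\sigma$, not for $\psi$.

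The gap is exactly the point you flag, the absolute implied constant, and none of your proposed remedies works. Lemma~\ref{siegel} is already uniform in the modulus; the only problem is its range condition $p\le(\log t)^B$ at the lower end of the sieving interval. Brun--Titchmarsh bounds $\pi(t;p,-1)$ only from \emph{above}, while the sieve needs a \emph{lower} bound for $B$. Your claim that $p\le\log x$ keeps $p\le(\log t)^B$ for all $t\ge\max(p^2,(\log x)^2)$ is false. For $p$ near $\log x$ (permitted by $x\ge e^p$) and $t$ near $(\log x)^2$, one has $(\log t)^B=(2\log_2x)^B<p$ for any fixed $B$ once $x$ is large. Finally, a trivial bound for a bounded set of $p$ does not touch the bad pairs $(p,x)$, in which $p$ is unbounded.

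The missing observation is that the target bound is itself trivial precisely where Siegel--Walfisz is unavailable. If $\log x\le e^p$, then $\log_2x\ge\log p\ge\log2$, so
\[
\left(\frac{\log_2x}{\log x}\right)^{\frac1{p-1}}\ge(\log2)\,e^{-p/(p-1)}\ge\frac{\log2}{e^2},
\]
and $T_p(x)\le x$ already gives the claim with an absolute constant. If $\log x>e^p$, i.e.\ $p<\log_2x$, then every $t\ge\log x$ satisfies $p<\log t$. So Lemma~\ref{siegel} with $B=1$ holds uniformly on all of $[\log x,x]$ (a fortiori on $[(\log x)^2,x]$), and the partial-summation error terms are $O(1/\log_2x)$ independently of $p$. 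Insert this case split before your Step~3 and your Steps~2--4 give the lemma as stated; this is exactly how the paper argues.
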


\textbf{Proof.} 
For every prime power \(q^a\), \(a\ge1\), we have
\[
\psi(q^a)=q^{a-1}(q+1).
\]
Hence, if \(q\equiv-1\pmod p\) and \(q\mid n\), then
\[
p\mid\psi(n).
\]
Taking the contrapositive, we obtain
\[
p\nmid\psi(n)
\quad\Longrightarrow\quad
q\nmid n
\]
for every prime \(q\equiv-1\pmod p\).

We first consider the range
\[
e^p\le x\le\exp(e^p).
\]
Since
\[
\log x\le e^p
\qquad\text{and}\qquad
\log_2x\ge\log p\ge\log2,
\]
we have
\[
\left(
\frac{\log_2x}{\log x}
\right)^{\frac1{p-1}}
\ge
\left(
\frac{\log2}{e^p}
\right)^{\frac1{p-1}}.
\]
Moreover,
\[
\left(
\frac{e^p}{\log2}
\right)^{\frac1{p-1}}
=
e^{\frac{p}{p-1}}
(\log2)^{-\frac1{p-1}}
\le
\frac{e^2}{\log2}<11.
\]
Therefore, using the trivial estimate \(T_p(x)\le x\), we obtain
\[
T_p(x)
\le
11x
\left(
\frac{\log_2x}{\log x}
\right)^{\frac1{p-1}}.
\]

We may now assume that
\[
\log x>e^p.
\]
Define
\[
\mathcal K_p(x)
:=
\left\{
q:\ q\text{ is prime},\
\log x<q<x,\
q\equiv-1\pmod p
\right\}.
\]
By the contrapositive established above,
\[
\{n\le x:p\nmid\psi(n)\}
\subseteq
\left\{
n\le x:
q\nmid n\text{ for every }q\in\mathcal K_p(x)
\right\}.
\]

Since \(\log x>e^p\), we have \(p<\log t\) for every
\(t\in[\log x,x]\). Hence Lemma~\ref{siegel}, with \(B=1\),
applies uniformly throughout this interval. By partial summation,
\[
\begin{aligned}
\sum_{q\in\mathcal K_p(x)}\frac1q
&=
\frac{\pi(x;p,-1)}x
-
\frac{\pi(\log x;p,-1)}{\log x}
+
\int_{\log x}^x
\frac{\pi(t;p,-1)}{t^2}\,dt\\
&=
\frac1{p-1}
\int_{\log x}^x\frac{dt}{t\log t}
+
O\left(\frac1{\log_2x}\right)\\
&=
\frac{\log_2x-\log_3x}{p-1}
+
O\left(\frac1{\log_2x}\right).
\end{aligned}
\]

Applying Lemma~\ref{sieve-lemma} with
\(\mathcal K_p(x)\) as the set of sieving primes, we obtain
\[
\begin{aligned}
T_p(x)
&\ll
x\exp\left(
-\sum_{q\in\mathcal K_p(x)}\frac1q
\right)\ll
x\exp\left(
-\frac{\log_2x-\log_3x}{p-1}
\right)=
x\left(
\frac{\log_2x}{\log x}
\right)^{\frac1{p-1}}.
\end{aligned}
\]
This completes the proof.

\begin{lemma}\label{div-lemma}

 Suppose that \(x\) is sufficiently large. Then there exists
a constant \(c_1>0\) such that, for all positive integers \(n\le x\),
with \(O(x/(\log_3 x)^2)\) exceptions, \(\psi(n)\) and \(\phi(n)\) are both divisible by every prime power \(p^a\) satisfying
\[
p^a<c_1\frac{\log_2 x}{\log_3 x}.
\]
\end{lemma}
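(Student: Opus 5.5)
Set $y:=c_1\log_2x/\log_3x$. The plan is to use a union bound over the $O(y)$ prime powers $p^a<y$, controlling each with Lemma~\ref{sieve-lemma}. The key observation is that both $\psi$ and $\phi$ are large multiples of their prime factors. If $q$ is a prime with $q\equiv-1\pmod{p^a}$ and $q\Vert n$, then $p^a\mid q+1=\psi(q)\mid\psi(n)$. Likewise, if $q\equiv1\pmod{p^a}$ and $q\mid n$, then $p^a\mid q-1\mid\phi(n)$. For $\phi$ we may even take $q\mid n$ with any multiplicity, since $q-1\mid\phi(q^b)$. For $\psi$ we also have $q+1\mid\psi(q^b)=q^{b-1}(q+1)$, so full divisibility $q\mid n$ suffices as well.

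Consequently, if $p^a\nmid\psi(n)$, then $n$ has no prime factor in
\[
\mathcal K^-_{p^a}(x):=\{q\text{ prime}:\ \log x<q<x,\ q\equiv-1\pmod{p^a}\},
\]
and analogously with $+1$ for $\phi$.

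\textbf{Step 1: reciprocal sums.} Since $p^a<y\le\log_2x\le(\log t)^1$ for $t\ge\log x$, Lemma~\ref{siegel} with $B=1$ applies uniformly. Partial summation, exactly as in the proof of Lemma~\ref{main1-lemma}, gives
\[
\sum_{q\in\mathcal K^\pm_{p^a}(x)}\frac1q=\frac{\log_2x-\log_3x}{\phi(p^a)}+O\!\left(\frac{1}{\log_2x}\right)\ge\frac{\log_2x}{2y}.
\]
The last inequality uses $\phi(p^a)\le p^a<y$. Note that the error term from Siegel--Walfisz is uniform because the modulus is at most $\log_2 x$, which is far below any power of $\log t$ in this range.

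\textbf{Step 2: sieve and union bound.} By Lemma~\ref{sieve-lemma}, the number of $n\le x$ with $p^a\nmid\psi(n)$ (or $p^a\nmid\phi(n)$) is
\[
\ll x\exp\!\left(-\frac{\log_2x}{2y}\right)=x\exp\!\left(-\frac{\log_3x}{2c_1}\right)=x(\log_2x)^{-1/(2c_1)}.
\]
The number of prime powers below $y$ is at most $2y\ll\log_2x/\log_3x$. Summing over them and over the two functions, the exceptional set has size
\[
\ll x\,\frac{(\log_2x)^{1-1/(2c_1)}}{\log_3x}.
\]
Choose $c_1$ small, say $c_1=1/4$, so that the exponent is $-1$. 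The total is then $\ll x/(\log_2x\log_3x)$, which is far smaller than $x/(\log_3x)^2$. Hence the claim holds, with room to spare.

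\textbf{Main obstacle.} The step needing care is the uniformity of Step 1. We need the Siegel--Walfisz main term $\frac{1}{\phi(p^a)}\int_{\log x}^x\frac{dt}{t\log t}$ to dominate the accumulated error uniformly in the modulus $p^a\le\log_2x$. This requires $q\le(\log t)^B$ throughout $t\in[\log x,x]$, which holds since $\log_2x\le\log t$ there. The error integral $\int_{\log x}^x t^{-1}\exp(-c_3\sqrt{\log t})\,dt$ is $O(1)$ and hence negligible against $\log_2x/y\asymp\log_3x$. If the lower endpoint $\log x$ caused trouble, I would instead start the sieve at primes $q>\exp(\sqrt{\log_2 x})$. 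This loses only a factor $1/2$ in the reciprocal sum, which is absorbed by shrinking $c_1$.
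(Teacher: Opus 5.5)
Your proof is correct, but it follows a different route from the paper. The paper gives no argument of its own for this lemma: it quotes the result of Guo et al.\ (stated for $n<x$) and absorbs the endpoint $n=x$ into the exceptional set.

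You instead derive the lemma from tools already in the paper, Lemma~\ref{sieve-lemma} and Lemma~\ref{siegel}. You run the argument of Lemma~\ref{main1-lemma} with moduli $p^a<y$ in place of a fixed prime, and then take a union bound. The two points that make this work are the ones you identify:
\begin{itemize}
\item For $q\equiv\pm1\pmod{p^a}$ we have $p^a\mid q-1\mid\phi(q^b)$, resp.\ $p^a\mid q+1\mid\psi(q^b)=q^{b-1}(q+1)$, for every $b\ge1$, so no condition $q\Vert n$ is needed.
\item $p^a<y\le\log_2x\le\log t$ on $[\log x,x]$, so Siegel--Walfisz with $B=1$ is uniform. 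The reciprocal sum is then $(\log_2x-\log_3x)/\phi(p^a)+O(1)\ge\log_3x/(2c_1)$.
\end{itemize}

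What your route buys is self-containedness and a much stronger bound. With $c_1=1/4$ the exceptional set is $O(x/(\log_2x\log_3x))$, and shrinking $c_1$ gives $O_A(x(\log_2x)^{-A})$ for any fixed $A$. The $(\log_3x)^2$ in the quoted form is what the union bound over prime moduli gives at $c_1=1$: a loss of $(\log_2x)^{-1}$ per modulus times $\asymp\log_2x/(\log_3x)^2$ prime moduli. The same argument also gives Lemma~\ref{div-lemma1}, once you restrict to $q\Vert n$ and discard the $O(x/\log x)$ integers divisible by $q^2$ for some prime $q>\log x$.

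A consequence worth recording is that the exceptional-set term is then not the bottleneck in Theorems~\ref{thm:fixed-c} and~\ref{thm-2}. So the concluding remark's claim that higher moments cannot help no longer applies. Using $\sum_{n\le x}(\psi(n)/n)^k\ll_k x$ (and likewise for $\sigma$) in place of the second moment would give $O_{c,k}(x/(\log_3x)^k)$ in Theorem~\ref{thm:fixed-c} for every fixed $k$.

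Your fallback in the last paragraph is unnecessary. The remark about losing a factor $1/2$ is also inaccurate: lowering the cutoff to $\exp(\sqrt{\log_2x})$ would only enlarge the reciprocal sum. Nothing in your argument depends on it.
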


\textbf{Proof.} The result in Guo et al.~\cite{Guo} is stated for \(n<x\).
If \(x\) is an integer, the additional integer \(n=x\) may be included
in the exceptional set. Since this adds at most one exception, the
stated error term is unchanged.

\begin{lemma}\label{div-lemma1}
 There exists a positive constant \(c_2\) such that, for
sufficiently large \(x\), \(\phi(n)\) and \(\sigma(n)\) are both
divisible by every prime power
\[
p^a<c_2\frac{\log_2 x}{\log_3 x}
\]
for all positive integers \(n\le x\), with
\(O(x/(\log_3 x)^2)\) exceptions.

\end{lemma}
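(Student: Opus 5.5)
The plan is to adapt the argument behind Lemma~\ref{div-lemma}, the result of Guo et al.~\cite{Guo} for \(\psi\) and \(\phi\), replacing \(\psi\) by \(\sigma\). The divisibility of \(\phi(n)\) is already covered by Lemma~\ref{div-lemma}. Since the union of two exceptional sets of size \(O(x/(\log_3x)^2)\) has the same size, it suffices to treat \(\sigma(n)\) and then take \(c_2=\min(c_1,c_1')\).

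Set \(y=c\log_2x/\log_3x\) for a small constant \(c>0\), and fix a prime power \(p^a<y\). The mechanism is the same as in the proof of Lemma~\ref{main1-lemma}. If \(q\equiv-1\pmod{p^a}\) is a prime with \(q\,\|\,n\), then \(p^a\mid q+1=\sigma(q)\mid\sigma(n)\). More simply, it is enough to have \(a\) distinct primes \(q\equiv-1\pmod p\) dividing \(n\) exactly once, because then \(p^a\mid\sigma(n)\). I would use the single-prime version with modulus \(p^a\).

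The integers \(n\le x\) having no prime \(q\equiv-1\pmod{p^a}\) with \(q\,\|\,n\) fall into two classes. In the first, \(n\) has no prime factor \(q\equiv -1\pmod{p^a}\) in \((\log x, x^{1/2}]\), say. In the second, such a \(q\) divides \(n\) only with \(q^2\mid n\); since \(q>\log x\), these number at most \(\sum_{q>\log x}x/q^2\ll x/\log x\). For the first class, Lemma~\ref{siegel} applies because \(p^a<y\le\log_2 x\ll(\log t)\) for \(t\ge\log x\). Partial summation as in the proof of Lemma~\ref{main1-lemma} then gives
\[
\sum_{\substack{\log x<q\le x^{1/2}\\ q\equiv-1\ (p^a)}}\frac1q=\frac{\log_2x-\log_3x+O(1)}{\phi(p^a)}.
\]
By Lemma~\ref{sieve-lemma}, the first class therefore has size
\[
\ll x\exp\Bigl(-\frac{\log_2x}{2p^a}\Bigr)\le x\exp\Bigl(-\frac{\log_3x}{2c}\Bigr)=\frac{x}{(\log_2x)^{1/(2c)}}.
\]

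Summing over the at most \(y\) prime powers \(p^a<y\), the total number of exceptions is
\[
\ll y\Bigl(\frac{x}{(\log_2x)^{1/(2c)}}+\frac{x}{\log x}\Bigr)\ll \frac{x\log_2x}{(\log_2x)^{1/(2c)}},
\]
which is \(O(x/(\log_3x)^2)\), indeed \(O(x/\log_2x)\), once \(c\le1/4\). So the \((\log_3x)^2\) saving is far from tight for this part. The main point needing care is uniformity in the modulus when applying Lemma~\ref{siegel} down to \(t=\log x\). Since \(p^a\le\log_2x\le\log t\) for \(t\ge\log x\) (for large \(x\)), the hypothesis with \(B=1\) holds. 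The error term \(\int_{\log x}^x t^{-1}\exp(-c_3\sqrt{\log t})\,dt\) is \(O(1)\), which only costs a constant factor in the exponential. Alternatively, one could simply invoke the \(\sigma\)-analogue in \cite{Guo}, if that paper states it, and repeat the remark about including \(n=x\).
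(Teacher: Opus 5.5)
Your proof is correct, but it takes a different route from the paper. The paper gives no argument of its own here. It quotes \cite[Theorem 4]{ref2}, which already asserts the simultaneous divisibility of $\phi(n)$ and $\sigma(n)$ with $O(x/(\log_3x)^2)$ exceptions, and only adds that the endpoint $n=x$ can be absorbed into the exceptional set.

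You instead take the $\phi$-part from Lemma~\ref{div-lemma}, with $c_2=\min(c_1,c)$. You then prove the $\sigma$-part directly with the machinery of Lemma~\ref{main1-lemma}:
\begin{itemize}
\item primes $q\equiv-1\pmod{p^a}$ in $(\log x,x^{1/2}]$;
\item Lemma~\ref{siegel}, which applies uniformly since $p^a<y\le\log_2x\le\log t$ on that range;
\item partial summation;
\item Lemma~\ref{sieve-lemma}.
\end{itemize}
The one real difference between $\sigma$ and $\psi$ is that $q\mid n$ forces $q+1\mid\sigma(n)$ only when $q\,\|\,n$. You handle this correctly by discarding the $O(x/\log x)$ integers divisible by $q^2$ for some prime $q>\log x$.

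The citation buys brevity. Your argument buys self-containment, using only tools already in the paper, and a much stronger bound for this part: $O(x/\log_2x)$ once $c\le1/4$.

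The same argument also covers the remaining cases of Lemma~\ref{div-lemma}. For $\phi$, use $q\equiv1\pmod{p^a}$ with merely $q\mid n$. For $\psi$, use $q\equiv-1\pmod{p^a}$ with merely $q\mid n$. Both then have $O(x/\log_2x)$ exceptions. This shows the factor $(\log_3x)^{-2}$ in Theorem~\ref{thm:fixed-c} comes only from the second-moment step. So, contrary to the concluding remark, a $k$-th moment bound (proved exactly as Lemma~\ref{lem:second-moments}) would improve that theorem to $O_{c,k}(x/(\log_3x)^k)$.
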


\textbf{Proof.} The result in \cite[Theorem 4]{ref2} is stated for \(n<x\). As discussed above, the possible endpoint \(n=x\) can be absorbed into the exceptional set.

\begin{lemma}\label{lem:second-moments}
There exist absolute constants $C_{\psi},C_{\sigma}>0$ such that, for every
$x\geq 1$,
\[
\sum_{n\leq x}\left(\frac{\psi(n)}{n}\right)^2
\leq C_{\psi}x \qquad
and
\qquad
\sum_{n\leq x}\left(\frac{\sigma(n)}{n}\right)^2
\leq C_{\sigma}x.
\]
In particular,
\[
\sum_{n\leq x}\left(\frac{\psi(n)}{n}\right)^2\ll x,
\qquad
\sum_{n\leq x}\left(\frac{\sigma(n)}{n}\right)^2\ll x.
\]
\end{lemma}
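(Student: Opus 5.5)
We prove both bounds by writing the normalized functions as Dirichlet convolutions with a nonnegative kernel supported on squarefree integers. Since $\psi$ and $\sigma$ are multiplicative, so are $\psi(n)/n$ and $\sigma(n)/n$. We have
\[
\frac{\psi(n)}{n}=\sum_{d\mid n}\frac{\mu^2(d)}{d},
\]
because $\psi(n)/n=\prod_{p\mid n}(1+1/p)$. For $\sigma$, write $\sigma(n)/n=\sum_{d\mid n}1/d$. Everything is nonnegative, so no cancellation needs to be controlled.

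Squaring and exchanging the order of summation gives
\[
\sum_{n\le x}\Bigl(\frac{\sigma(n)}{n}\Bigr)^2
=\sum_{d_1,d_2\le x}\frac{1}{d_1d_2}\,\#\{n\le x:\ \mathrm{lcm}(d_1,d_2)\mid n\}
\le x\sum_{d_1,d_2\ge1}\frac{1}{d_1d_2\,\mathrm{lcm}(d_1,d_2)}.
\]
We then show the last double series converges. Write $g=\gcd(d_1,d_2)$, $d_1=ga$, $d_2=gb$, so that $\mathrm{lcm}(d_1,d_2)=gab$. The general term becomes $1/(g^3a^2b^2)$, and summing over all $g,a,b$ (dropping the coprimality condition only enlarges the sum) gives at most $\zeta(3)\zeta(2)^2<\infty$. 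Hence we may take $C_\sigma=\zeta(3)\zeta(2)^2$.

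For $\psi$ the same computation applies with the extra weights $\mu^2(d_i)\le1$. It gives
\[
\sum_{n\le x}\Bigl(\frac{\psi(n)}{n}\Bigr)^2\le x\,\zeta(3)\zeta(2)^2.
\]
Alternatively, the pointwise inequality $\psi(n)\le\sigma(n)$ yields the $\psi$ bound directly from the $\sigma$ bound. The estimates hold for every $x\ge1$, since the only inequality used is $\lfloor x/m\rfloor\le x/m$. The "in particular" statements are immediate.

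There is no real obstacle. The one point needing care is convergence of the lcm-weighted double series, and the gcd parametrization settles it. If a sharper constant were wanted, one could instead use the Euler product of $\sum_n (\sigma(n)/n)^2 n^{-s}$ near $s=1$. That is unnecessary here.
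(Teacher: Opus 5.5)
Your proof is correct. It rests on the same mechanism as the paper's: a divisor-sum representation with nonnegative weights, interchange of summation, and $\lfloor x/m\rfloor\le x/m$. The bookkeeping is different, though.

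\textbf{How the paper does it.} The paper M\"obius-inverts the square itself, writing $(\sigma(n)/n)^2=\sum_{d\mid n}b(d)$ with $b=F*\mu$. It then has to check $b(p^\nu)\ge 0$ from the explicit prime-power formula, bound $b(p^\nu)\le 4/p^\nu$, and prove convergence of $\sum_d b(d)/d$ through an Euler product. It runs a parallel computation for $\psi$ with the kernel $a(p)=2/p+1/p^2$.

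\textbf{How your route compares.} Your double sum is in fact the same kernel, since $b(m)=\sum_{\mathrm{lcm}(d_1,d_2)=m}1/(d_1d_2)$. Your form gains three things over the paper's:
\begin{enumerate}
\item Nonnegativity of the kernel is automatic rather than checked at prime powers.
\item The gcd parametrization replaces the Euler-product convergence argument and gives the explicit constant $\zeta(3)\zeta(2)^2$. Keeping the coprimality condition would give exactly $\zeta(3)\zeta(2)^2/\zeta(4)=\tfrac52\zeta(3)$, the true mean value, with no need for Dirichlet series near $s=1$.
\item The pointwise inequality $\psi(n)\le\sigma(n)$ settles the $\psi$ case with no second computation.
\end{enumerate}
The paper's local-factor check is the more systematic template for a general multiplicative function whose M\"obius-inverted square is nonnegative. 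Yours is shorter and more transparent for these two functions.
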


\begin{proof}
We first consider the Dedekind function. Since
\[
\frac{\psi(n)}{n}
=
\prod_{p\mid n}\left(1+\frac1p\right),
\]
we have
\[
\left(\frac{\psi(n)}{n}\right)^2
=
\prod_{p\mid n}
\left(1+\frac2p+\frac1{p^2}\right).
\]
Define the multiplicative function \(a\) by
\[
a(1)=1,\qquad
a(p)=\frac2p+\frac1{p^2},
\qquad
a(p^\nu)=0\quad(\nu\geq2).
\]
Then
\[
\left(\frac{\psi(n)}{n}\right)^2
=
\sum_{d\mid n}a(d).
\]
Since \(a(d)\geq0\), it follows that
\[
\begin{aligned}
\sum_{n\leq x}\left(\frac{\psi(n)}{n}\right)^2
&=
\sum_{n\leq x}\sum_{d\mid n}a(d)\\
&=
\sum_{d\leq x}a(d)
\left\lfloor\frac{x}{d}\right\rfloor\\
&\leq
x\sum_{d\leq x}\frac{a(d)}{d}\\
&\leq
x\sum_{d=1}^{\infty}\frac{a(d)}{d}.
\end{aligned}
\]

We now show that the series on the right converges. For \(z\geq2\),
multiplicativity gives
\[
\sum_{\substack{d\geq1\\ p\mid d\Rightarrow p\leq z}}
\frac{a(d)}{d}
=
\prod_{p\leq z}
\left(1+\frac{a(p)}{p}\right)
=
\prod_{p\leq z}
\left(1+\frac2{p^2}+\frac1{p^3}\right).
\]
Put
\[
u_p:=\frac2{p^2}+\frac1{p^3}.
\]
Since \(p\geq2\), we have
\[
0<u_p
=
\frac2{p^2}+\frac1{p^3}
\leq
\frac3{p^2}.
\]
Moreover,
\[
\sum_p\frac1{p^2}
\leq
\sum_{n=2}^{\infty}\frac1{n^2}
=
\frac{\pi^2}{6}-1
<
\infty.
\]
Hence
\[
\sum_pu_p
\leq
3\sum_p\frac1{p^2}
<
\infty.
\]
Using the elementary inequality
\[
\log(1+t)\leq t
\qquad(t\geq0),
\]
we obtain
\[
0\leq
\sum_p\log(1+u_p)
\leq
\sum_pu_p
<
\infty.
\]
Therefore the Euler product
\[
\prod_p
\left(1+\frac2{p^2}+\frac1{p^3}\right)
\]
converges to a finite positive value.

Since \(a(d)\geq0\), letting \(z\to\infty\) in the finite Euler-product
identity above and using monotone convergence yields
\[
\sum_{d=1}^{\infty}\frac{a(d)}{d}
=
\prod_p
\left(1+\frac2{p^2}+\frac1{p^3}\right)
<\infty.
\]
We may therefore define
\[
C_\psi
:=
\sum_{d=1}^{\infty}\frac{a(d)}{d}
=
\prod_p
\left(1+\frac2{p^2}+\frac1{p^3}\right)
<\infty.
\]
Consequently,
\[
\sum_{n\leq x}
\left(\frac{\psi(n)}{n}\right)^2
\leq C_\psi x.
\]

We next consider the sum-of-divisors function. Put
\[
F(n):=\left(\frac{\sigma(n)}{n}\right)^2.
\]
Since \(\sigma(n)/n\) is multiplicative, so is \(F\). Define
\[
b:=F*\mu.
\]
Then \(b\) is multiplicative and
\[
F(n)=\sum_{d\mid n}b(d).
\]
For every prime power \(p^\nu\), \(\nu\geq1\), we have
\[
\frac{\sigma(p^\nu)}{p^\nu}
=
1+\frac1p+\cdots+\frac1{p^\nu}.
\]
Hence
\[
b(p^\nu)
=
\left(1+\frac1p+\cdots+\frac1{p^\nu}\right)^2
-
\left(1+\frac1p+\cdots+\frac1{p^{\nu-1}}\right)^2.
\]
In particular, \(b(p^\nu)\geq0\), and therefore \(b(d)\geq0\) for all \(d\).

Let
\[
S_\nu:=1+\frac1p+\cdots+\frac1{p^\nu}.
\]
Then
\[
b(p^\nu)
=
S_\nu^2-S_{\nu-1}^2
=
(S_\nu-S_{\nu-1})(S_\nu+S_{\nu-1}).
\]
Since
\[
S_\nu-S_{\nu-1}=\frac1{p^\nu}
\]
and
\[
S_\nu,S_{\nu-1}
\leq
\sum_{j=0}^{\infty}\frac1{p^j}
=
\frac{p}{p-1}
\leq2,
\]
we obtain
\[
0\leq b(p^\nu)\leq\frac4{p^\nu}.
\]

Since \(b(d)\geq0\), it follows that
\[
\begin{aligned}
\sum_{n\leq x}\left(\frac{\sigma(n)}{n}\right)^2
&=
\sum_{n\leq x}\sum_{d\mid n}b(d)\\
&=
\sum_{d\leq x}b(d)
\left\lfloor\frac{x}{d}\right\rfloor\\
&\leq
x\sum_{d\leq x}\frac{b(d)}{d}\\
&\leq
x\sum_{d=1}^{\infty}\frac{b(d)}{d}.
\end{aligned}
\]

We now show that the series on the right converges. For \(z\geq2\),
multiplicativity gives
\[
\sum_{\substack{d\geq1\\ p\mid d\Rightarrow p\leq z}}
\frac{b(d)}{d}
=
\prod_{p\leq z}
\left(
1+\sum_{\nu\geq1}\frac{b(p^\nu)}{p^\nu}
\right).
\]
Put
\[
v_p:=
\sum_{\nu\geq1}\frac{b(p^\nu)}{p^\nu}.
\]
By the estimate above,
\[
0\leq v_p
\leq
4\sum_{\nu\geq1}\frac1{p^{2\nu}}
=
\frac4{p^2-1}.
\]
Since \(p\geq2\),
\[
\frac1{p^2-1}\leq\frac2{p^2},
\]
and hence
\[
\sum_pv_p
\leq
8\sum_p\frac1{p^2}
\leq
8\sum_{n=2}^{\infty}\frac1{n^2}
=
8\left(\frac{\pi^2}{6}-1\right)
<
\infty.
\]
Again using
\[
\log(1+t)\leq t
\qquad(t\geq0),
\]
we obtain
\[
0\leq
\sum_p\log(1+v_p)
\leq
\sum_pv_p
<
\infty.
\]
Therefore the Euler product
\[
\prod_p(1+v_p)
=
\prod_p
\left(
1+\sum_{\nu\geq1}\frac{b(p^\nu)}{p^\nu}
\right)
\]
converges to a finite positive value.

Since \(b(d)\geq0\), letting \(z\to\infty\) in the finite Euler-product
identity above and using monotone convergence yields
\[
\sum_{d=1}^{\infty}\frac{b(d)}{d}
=
\prod_p
\left(
1+\sum_{\nu\geq1}\frac{b(p^\nu)}{p^\nu}
\right)
<\infty.
\]
We may therefore define
\[
C_\sigma
:=
\sum_{d=1}^{\infty}\frac{b(d)}{d}
=
\prod_p
\left(
1+\sum_{\nu\geq1}\frac{b(p^\nu)}{p^\nu}
\right)
<\infty.
\]
Consequently,
\[
\sum_{n\leq x}
\left(\frac{\sigma(n)}{n}\right)^2
\leq C_\sigma x.
\]
This completes the proof.
\end{proof}

\medskip

\section{Proof of Theorem~\ref{thm:fixed-c} }
Put
\[
        y_x=c_1\frac{\log_2 x}{\log_3 x},
\]
where \(c_1\) is the constant appearing in Lemma~\ref{div-lemma}, and define
\[
        P(y_x):=\prod_{p<y_x}p .
\]
By Lemma~\ref{div-lemma}, there exists an exceptional set
\(\mathcal E(x)\subseteq [1,x]\) such that
\[
        |\mathcal E(x)|
        =O\left(\frac{x}{(\log_3 x)^2}\right),
\]
and, for every \(n\le x\) with \(n\notin\mathcal E(x)\), the integer
\(\psi(n)\) is divisible by every prime power \(p^a<y_x\). In particular,
every prime \(p<y_x\) divides \(\psi(n)\). Hence,
\[
        P(y_x)\mid \psi(n)
        \qquad (n\le x,\ n\notin\mathcal E(x)).
\]

Therefore, for \(n\le x\) with \(n\notin\mathcal E(x)\), we have
\[
\begin{aligned}
 \phi(\psi(n))
 &=\psi(n)\prod_{p\mid \psi(n)}
        \left(1-\frac1p\right)\\
 &\leq \psi(n)\prod_{p<y_x}
        \left(1-\frac1p\right).
\end{aligned}
\]
By Mertens' theorem, for sufficiently large \(x\),
\[
        \prod_{p<y_x}\left(1-\frac1p\right)
        <\frac1{\log y_x}.
\]
Consequently,
\[
        \phi(\psi(n))
        <\frac{\psi(n)}{\log y_x}
        \qquad (n\le x,\ n\notin\mathcal E(x)).
\]

Let
\[
        \mathcal A_c(x):=
        \{n\le x:\phi(\psi(n))\ge cn\}.
\]
If
\[
        n\in\mathcal A_c(x)\setminus\mathcal E(x),
\]
then
\[
        cn\leq \phi(\psi(n))
        <\frac{\psi(n)}{\log y_x},
\]
and hence
\[
        \frac{\psi(n)}n>c\log y_x .
\]

Now put
\[
        \mathcal B_c(x):=
        \left\{
        n\le x:\frac{\psi(n)}n>c\log y_x
        \right\}.
\]
Then
\[
        \mathcal A_c(x)\setminus\mathcal E(x)
        \subseteq \mathcal B_c(x),
\]
and therefore
\[
        |\mathcal A_c(x)|
        \leq |\mathcal E(x)|+|\mathcal B_c(x)|.
\]

For every \(n\in\mathcal B_c(x)\), we have
\[
        \frac{\psi(n)}n>c\log y_x.
\]
Since both sides are positive, squaring gives
\[
        \left(\frac{\psi(n)}n\right)^2
        >c^2(\log y_x)^2.
\]
Summing this inequality over all \(n\in\mathcal B_c(x)\), we obtain
\[
\begin{aligned}
 \sum_{n\in\mathcal B_c(x)}
 \left(\frac{\psi(n)}n\right)^2
 &>
 \sum_{n\in\mathcal B_c(x)}
 c^2(\log y_x)^2\\
 &=
 c^2(\log y_x)^2|\mathcal B_c(x)|.
\end{aligned}
\]
On the other hand, since all the summands are non-negative and
\(\mathcal B_c(x)\subseteq [1,x]\),
\[
        \sum_{n\in\mathcal B_c(x)}
        \left(\frac{\psi(n)}n\right)^2
        \leq
        \sum_{n\le x}
        \left(\frac{\psi(n)}n\right)^2.
\]
Hence
\[
        c^2(\log y_x)^2|\mathcal B_c(x)|
        <
        \sum_{n\le x}
        \left(\frac{\psi(n)}n\right)^2,
\]
and therefore
\[
        |\mathcal B_c(x)|
        \leq
        \frac1{c^2(\log y_x)^2}
        \sum_{n\le x}
        \left(\frac{\psi(n)}n\right)^2.
\]
It follows that
\[
\begin{aligned}
 |\mathcal A_c(x)|
 &\leq
 |\mathcal E(x)|
 +
 \frac1{c^2(\log y_x)^2}
 \sum_{n\le x}
 \left(\frac{\psi(n)}n\right)^2.
\end{aligned}
\]

By Lemma~\ref{lem:second-moments},
\[
        \sum_{n\le x}
        \left(\frac{\psi(n)}n\right)^2
        \ll x.
\]
Hence
\[
\begin{aligned}
 |\mathcal A_c(x)|
 &\ll
 \frac{x}{(\log_3x)^2}
 +
 \frac{x}{c^2(\log y_x)^2}.
\end{aligned}
\]

Finally,
\[
\begin{aligned}
 \log y_x
 &=\log\left(c_1\frac{\log_2x}{\log_3x}\right)\\
 &=\log_3x-\log_4x+O(1)\\
 &=\log_3x
 \left(
 1-\frac{\log_4x}{\log_3x}
 +\frac{O(1)}{\log_3x}
 \right)\\
 &=(1+o(1))\log_3x .
\end{aligned}
\]
Therefore,
\[
        \frac1{(\log y_x)^2}
        \asymp
        \frac1{(\log_3x)^2},
\]
and consequently
\[
        \#\{n\le x:\phi(\psi(n))\ge cn\}
        \ll_c
        \frac{x}{(\log_3x)^2}.
\]
In particular,
\[
        \#\{n\le x:\phi(\psi(n))\ge cn\}=o(x).
\]

The estimate for \(\phi(\sigma(n))\) is obtained in the same way.
By Lemma~\ref{div-lemma1}, there exists an exceptional set
\(\mathcal E_\sigma(x)\subseteq[1,x]\) such that
\[
        |\mathcal E_\sigma(x)|
        =
        O\left(\frac{x}{(\log_3x)^2}\right),
\]
and, for every \(n\le x\) with
\(n\notin\mathcal E_\sigma(x)\), the integer \(\sigma(n)\) is divisible
by every prime power
\[
        p^a<c_2\frac{\log_2x}{\log_3x}.
\]
Put
\[
        y_x=c_2\frac{\log_2x}{\log_3x}.
\]
Then every prime \(p<y_x\) divides \(\sigma(n)\), and hence
\[
        P(y_x)\mid\sigma(n)
        \qquad
        (n\le x,\ n\notin\mathcal E_\sigma(x)).
\]
Therefore,
\[
\begin{aligned}
 \phi(\sigma(n))
 &=
 \sigma(n)
 \prod_{p\mid\sigma(n)}
 \left(1-\frac1p\right)\\
 &\leq
 \sigma(n)
 \prod_{p<y_x}
 \left(1-\frac1p\right)\\
 &<
 \frac{\sigma(n)}{\log y_x},
\end{aligned}
\]
for sufficiently large \(x\), by Mertens' theorem.

Let
\[
        \mathcal C_c(x)
        :=
        \{n\le x:\phi(\sigma(n))\ge cn\}.
\]
If
\[
        n\in
        \mathcal C_c(x)\setminus\mathcal E_\sigma(x),
\]
then
\[
        cn
        \leq
        \phi(\sigma(n))
        <
        \frac{\sigma(n)}{\log y_x},
\]
and hence
\[
        \frac{\sigma(n)}n
        >
        c\log y_x.
\]

Put
\[
        \mathcal D_c(x)
        :=
        \left\{
        n\le x:
        \frac{\sigma(n)}n>c\log y_x
        \right\}.
\]
Then
\[
        \mathcal C_c(x)\setminus\mathcal E_\sigma(x)
        \subseteq
        \mathcal D_c(x),
\]
so that
\[
        |\mathcal C_c(x)|
        \leq
        |\mathcal E_\sigma(x)|
        +
        |\mathcal D_c(x)|.
\]

For every \(n\in\mathcal D_c(x)\),
\[
        \frac{\sigma(n)}n>c\log y_x,
\]
and hence
\[
        \left(\frac{\sigma(n)}n\right)^2
        >
        c^2(\log y_x)^2.
\]
Summing over \(n\in\mathcal D_c(x)\), we obtain
\[
\begin{aligned}
 \sum_{n\in\mathcal D_c(x)}
 \left(\frac{\sigma(n)}n\right)^2
 &>
 c^2(\log y_x)^2
 |\mathcal D_c(x)|.
\end{aligned}
\]
Since all the summands are non-negative,
\[
        \sum_{n\in\mathcal D_c(x)}
        \left(\frac{\sigma(n)}n\right)^2
        \leq
        \sum_{n\le x}
        \left(\frac{\sigma(n)}n\right)^2.
\]
Therefore,
\[
        |\mathcal D_c(x)|
        \leq
        \frac1{c^2(\log y_x)^2}
        \sum_{n\le x}
        \left(\frac{\sigma(n)}n\right)^2.
\]
Consequently,
\[
|C_c(x)|
\leq
|\mathcal E_\sigma(x)|
+
\frac{1}{c^2(\log y_x)^2}
\sum_{n\leq x}
\left(\frac{\sigma(n)}{n}\right)^2.
\]

By Lemma~\ref{lem:second-moments},
\[
        \sum_{n\le x}
        \left(\frac{\sigma(n)}n\right)^2
        \ll x.
\]
Thus
\[
        |\mathcal C_c(x)|
        \ll_c
        \frac{x}{(\log_3x)^2}
        +
        \frac{x}{(\log y_x)^2}.
\]
Since
\[
        \log y_x=(1+o(1))\log_3x,
\]
we conclude that
\[
        \#\{n\le x:\phi(\sigma(n))\ge cn\}
        \ll_c
        \frac{x}{(\log_3x)^2}.
\]
In particular,
\[
        \#\{n\le x:\phi(\sigma(n))\ge cn\}=o(x).
\]

\section{Proof of Theorem~\ref{thm-2}}

The proof is similar to that of Theorem~\ref{thm:fixed-c}.
We give the details needed for the second-moment argument.

Let
\[
        y_x=c_1\frac{\log_2x}{\log_3x},
\]
where \(c_1\) is the constant appearing in Lemma~\ref{div-lemma}.
By that lemma, there exists an exceptional set
\(\mathcal E(x)\subseteq[1,x]\) such that
\[
        |\mathcal E(x)|
        =
        O\left(\frac{x}{(\log_3x)^2}\right),
\]
and, outside \(\mathcal E(x)\),
\[
        P(y_x)\mid\psi(n).
\]
Hence, by Mertens' theorem,
\[
        \phi(\psi(n))
        <
        \frac{\psi(n)}{\log y_x}
        \qquad
        (n\le x,\ n\notin\mathcal E(x)).
\]

Recall that
\[
        G_g(x)
        :=
        \left\{
        n\le x:
        \phi(\psi(n))\geq\frac{n}{g(n)}
        \right\}.
\]
Suppose that
\[
        n\in G_g(x)\setminus\mathcal E(x).
\]
Then
\[
        \frac{n}{g(n)}
        \leq
        \phi(\psi(n))
        <
        \frac{\psi(n)}{\log y_x}.
\]
Hence
\[
        \frac{\psi(n)}n
        >
        \frac{\log y_x}{g(n)}.
\]
Since \(g\) is non-decreasing and \(n\le x\),
\[
        g(n)\le g(x),
\]
so that
\[
        \frac1{g(n)}
        \geq
        \frac1{g(x)}.
\]
Therefore
\[
        \frac{\psi(n)}n
        >
        \frac{\log y_x}{g(x)}.
\]

Define
\[
        \mathcal B_g(x)
        :=
        \left\{
        n\le x:
        \frac{\psi(n)}n
        >
        \frac{\log y_x}{g(x)}
        \right\}.
\]
Then
\[
        G_g(x)\setminus\mathcal E(x)
        \subseteq
        \mathcal B_g(x),
\]
and hence
\[
        |G_g(x)|
        \leq
        |\mathcal E(x)|
        +
        |\mathcal B_g(x)|.
\]

For each \(n\in\mathcal B_g(x)\),
\[
        \frac{\psi(n)}n
        >
        \frac{\log y_x}{g(x)}.
\]
Squaring gives
\[
        \left(\frac{\psi(n)}n\right)^2
        >
        \frac{(\log y_x)^2}{g(x)^2}.
\]
Summing over all \(n\in\mathcal B_g(x)\), we obtain
\[
\begin{aligned}
 \sum_{n\in\mathcal B_g(x)}
 \left(\frac{\psi(n)}n\right)^2
 &>
 \frac{(\log y_x)^2}{g(x)^2}
 |\mathcal B_g(x)|.
\end{aligned}
\]
On the other hand,
\[
        \sum_{n\in\mathcal B_g(x)}
        \left(\frac{\psi(n)}n\right)^2
        \leq
        \sum_{n\le x}
        \left(\frac{\psi(n)}n\right)^2.
\]
Therefore
\[
        |\mathcal B_g(x)|
        \leq
        \frac{g(x)^2}{(\log y_x)^2}
        \sum_{n\le x}
        \left(\frac{\psi(n)}n\right)^2.
\]
Consequently,
\[
\begin{aligned}
 |G_g(x)|\le
 &|\mathcal E(x)|
 +
 \frac{g(x)^2}{(\log y_x)^2}
 \sum_{n\le x}
 \left(\frac{\psi(n)}n\right)^2.
\end{aligned}
\]
By Lemma~\ref{lem:second-moments},
\[
        \sum_{n\le x}
        \left(\frac{\psi(n)}n\right)^2
        \ll x.
\]
Hence
\[
        |G_g(x)|
        \ll
        \frac{x}{(\log_3x)^2}
        +
        \frac{g(x)^2x}{(\log y_x)^2}.
\]
Since
\[
        \log y_x=(1+o(1))\log_3x,
\]
we obtain
\[
        |G_g(x)|
        \ll
        \frac{x}{(\log_3x)^2}
        +
        \frac{g(x)^2x}{(\log_3x)^2}.
\]
Equivalently,
\[
        |G_g(x)|
        \ll
        \frac{(1+g(x)^2)x}{(\log_3x)^2}.
\]
Since
\[
        g(x)=o(\log_3x),
\]
we have
\[
        \frac{g(x)^2}{(\log_3x)^2}=o(1),
\]
and therefore
\[
        |G_g(x)|=o(x).
\]

The estimate for \(H_h(x)\) is obtained in the same way. Put
\[
        y_x=c_2\frac{\log_2x}{\log_3x}.
\]
By Lemma~\ref{div-lemma1}, outside an exceptional set
\(\mathcal E_\sigma(x)\) satisfying
\[
        |\mathcal E_\sigma(x)|
        =
        O\left(\frac{x}{(\log_3x)^2}\right),
\]
we have
\[
        \phi(\sigma(n))
        <
        \frac{\sigma(n)}{\log y_x}.
\]

Recall that
\[
        H_h(x)
        :=
        \left\{
        n\le x:
        \phi(\sigma(n))
        \geq
        \frac{n}{h(n)}
        \right\}.
\]
If
\[
        n\in H_h(x)\setminus\mathcal E_\sigma(x),
\]
then
\[
        \frac{\sigma(n)}n
        >
        \frac{\log y_x}{h(n)}
        \geq
        \frac{\log y_x}{h(x)}.
\]
Define
\[
        \mathcal D_h(x)
        :=
        \left\{
        n\le x:
        \frac{\sigma(n)}n
        >
        \frac{\log y_x}{h(x)}
        \right\}.
\]
Then
\[
        H_h(x)\setminus\mathcal E_\sigma(x)
        \subseteq
        \mathcal D_h(x),
\]
and hence
\[
        |H_h(x)|
        \leq
        |\mathcal E_\sigma(x)|
        +
        |\mathcal D_h(x)|.
\]

For every \(n\in\mathcal D_h(x)\),
\[
        \left(\frac{\sigma(n)}n\right)^2
        >
        \frac{(\log y_x)^2}{h(x)^2}.
\]
Therefore,
\[
\begin{aligned}
 \frac{(\log y_x)^2}{h(x)^2}
 |\mathcal D_h(x)|
 &<
 \sum_{n\in\mathcal D_h(x)}
 \left(\frac{\sigma(n)}n\right)^2\\
 &\leq
 \sum_{n\le x}
 \left(\frac{\sigma(n)}n\right)^2.
\end{aligned}
\]
Thus
\[
        |\mathcal D_h(x)|
        \leq
        \frac{h(x)^2}{(\log y_x)^2}
        \sum_{n\le x}
        \left(\frac{\sigma(n)}n\right)^2.
\]
Using Lemma~\ref{lem:second-moments}, we obtain
\[
\begin{aligned}
 |H_h(x)|
 &\ll
 \frac{x}{(\log_3x)^2}
 +
 \frac{h(x)^2x}{(\log y_x)^2}\\
 &\ll
 \frac{x}{(\log_3x)^2}
 +
 \frac{h(x)^2x}{(\log_3x)^2}.
\end{aligned}
\]
Therefore
\[
        |H_h(x)|
        \ll
        \frac{(1+h(x)^2)x}{(\log_3x)^2}.
\]
Since
\[
        h(x)=o(\log_3x),
\]
it follows that
\[
        |H_h(x)|=o(x).
\]

\section{\textbf{Proof of Theorem \ref{thm-4} }and Corollary~\ref{cor:iterations}}
\medskip

Note that

\begin{equation*}               \psi\left(\psi\left(n\right)\right)=\psi\left(n\right)\prod_{p \mid \psi\left(n\right)} \left(1 + \frac{1}{p}\right)
\end{equation*}
Denote by $P(z)= \prod\limits_{p\leq z} p$ , the product of all primes $\leq z$. If $P(z) | \psi(n)$, then

\begin{align*}
      \psi(\psi(n)) &= \psi(n) \prod_{p|\psi(n)} \left(1+\frac{1}{p}\right)\\
      &\geq \psi(n) \prod_{p\leq z} \left(1+ \frac{1}{p}\right)  > \psi(n)\log z \geq n\log z ,
\end{align*}
where the strict inequality follows from Lemma~\ref{sieve1-lemma},
and the last inequality follows from \(\psi(n)\ge n\).

Thus, for any $c>0$, $\psi(\psi(n))> cn$ if $P(z)\mid\psi(n)$ and $\log z > c$. Take
\[
z_c=\max\{\lfloor e^c\rfloor+1,C_0\},
\]
where $C_0$ is a sufficiently large absolute constant ensuring that
Lemma~\ref{sieve1-lemma} holds for all $z\ge C_0$. This choice
guarantees both $\log z_c>c$ and the applicability of the lemma.
Taking the contrapositive yields
\[
\psi(\psi(n))\le cn \quad\Longrightarrow\quad P(z_c)\nmid\psi(n).
\]
Hence,
\[
\#\{n\le x:\psi(\psi(n))\le cn\}
\le
\#\{n\le x:P(z_c)\nmid\psi(n)\}.
\]
By Lemma~\ref{main1-lemma},
\[
\#\{n\le x:P(z_c)\nmid\psi(n)\}
\le
\sum_{p\le z_c}T_p(x)
=
O_c\left(
x\left(\frac{\log_2 x}{\log x}\right)^{1/z_c}
\right).
\]
Therefore,
\[
\#\{n\le x:\psi(\psi(n))\le cn\}
\ll_c
x\left(\frac{\log_2 x}{\log x}\right)^{1/z_c}
=o(x).
\]

The same argument applies to \(\psi(\sigma(n))\). Indeed, if
\(P(z_c)\mid \sigma(n)\), then, since \(\sigma(n)\ge n\),
\[
\psi(\sigma(n))
\geq \sigma(n)\prod_{p\le z_c}\left(1+\frac1p\right)
>
n\log z_c
>
cn.
\]
Consequently,
\[
\{n\le x:\psi(\sigma(n))\le cn\}
\subseteq
\{n\le x:P(z_c)\nmid\sigma(n)\}.
\]
For a prime \(p\), let
\[
S_p(x):=\#\{n\le x:p\nmid\sigma(n)\}.
\]
By the sieve estimate of Dixit and Bhattacharjee~\cite{Bhattacharjee},
\[
S_p(x)
=O\!\left(
x\left(\frac{\log_2 x}{\log x}\right)^{1/(p-1)}
\right).
\]
Since \(z_c\) is fixed, we obtain
\[
\begin{aligned}
\#\{n\le x:P(z_c)\nmid\sigma(n)\}
&\le \sum_{p\le z_c}S_p(x)\\
&\ll_c
x\left(\frac{\log_2 x}{\log x}\right)^{1/z_c}.
\end{aligned}
\]
Combining this with the preceding inclusion, we obtain
\[
\#\{n\le x:\psi(\sigma(n))\le cn\}
\ll_c
x\left(\frac{\log_2 x}{\log x}\right)^{1/z_c}
=o(x).
\]
This completes the proof of Theorem~\ref{thm-4}.

For $\psi$, note that $\psi(m)\ge m$ for all $m$, hence
\[
\psi^{(k)}(n)\ge\psi^{(k-1)}(n)\ge\cdots\ge\psi^{(2)}(n)=\psi(\psi(n)).
\]
Thus 
\[\{n\le x:\psi^{(k)}(n)\le cn\}\subseteq\{n\le x:\psi(\psi(n))\le cn\},\] 
and the latter set has size $o(x)$ by Theorem~\ref{thm-4}. This completes the proof of Corollary~\ref{cor:iterations}.

\section{\bf Concluding Remark}
\medskip

\begin{remark}
The bounds in Theorem~\ref{thm:fixed-c} are not claimed to be optimal.
The second-moment argument gives an upper bound of order
\[
\frac{x}{(\log_3x)^2}
\]
for the relevant large-value sets. This is already of the same order
as the exceptional sets arising from Lemmas~\ref{div-lemma} and
\ref{div-lemma1}. Consequently, within the present approach, replacing
the second moment by a higher fixed moment would not by itself improve
the final order of magnitude. Any further improvement would require,
for example, a sharper small-prime divisibility estimate or a more
direct use of the arithmetic structure of the exceptional sets.
\end{remark}

\section*{Acknowledgments}
The author thanks Kannan Soundararajan for kindly pointing out the
counterexample \(n=39270\) to S\'andor's conjecture
\(\phi(\psi(n))\le n\).

\end{document}